%!TEX root = CalabiYauTotaro.tex
\documentclass[twoside,11pt]{amsart}
  \usepackage{amsmath,amsfonts,amssymb,amsthm,mathtools,multirow,tabls,mathrsfs}
  \usepackage[a4paper,margin=1.2in]{geometry}  
  \usepackage[all]{xy}
  \usepackage[utf8]{inputenc}
  \usepackage[T1]{fontenc}
  \usepackage[shortlabels]{enumitem}
  \usepackage[colorlinks,citecolor=magenta,linkcolor=black]{hyperref}

  \numberwithin{equation}{section}

  % environments
\newtheorem{thm}{Theorem}
\newtheorem{lemma}[thm]{Lemma}
\newtheorem{prop}[thm]{Proposition}

\newtheorem{thmi}{Theorem}     % separate numbering for the introduction
 % separate numbering for the introduction

\newtheorem{question}{Question}

\theoremstyle{definition}

% script letters

\newcommand{\cL}{\mathscr L}

\newcommand{\cO}{\mathscr O}

\newcommand{\cS}{\mathscr S}

\newcommand{\cY}{\mathscr Y}

% boldface letters
\newcommand{\bb}[1]{\mathbf{#1}}

\newcommand{\PP}{\bb{P}}

\newcommand{\ZZ}{\bb{Z}}

% symbols
\renewcommand{\phi}{\varphi}
\newcommand{\isom}{\simeq}

  % categories
\newcommand{\ra}{\longrightarrow}    % to be used instead of \to in displayed formulas

% operators

\DeclareMathOperator{\Def}{Def}

\DeclareMathOperator{\Spec}{Spec}

% sheaf Hom etc
\newcommand{\cExt}{{\mathscr E}\kern -.5pt xt}
\newcommand{\cHom}{\mathscr{H}\kern -.5pt om}

% linear system

\newcommand{\linsys}[1]{{\lvert{#1}\rvert}}

% other

% tildes
% - for uppercase letters X Y Z F D U R A ... use \wt X etc
% - for lowercase letters \pi ... use \swt \pi 
% - with the exception of "f" and "s" (used in one proof), where I used \tilde
% - for script I, R (ideal used in section 3.4) use \wtcI, \wtcR
% - for complex expressions e.g. X\times Y use \widetilde{...}

%=============================================================================================%

  \author[P. Achinger]{Piotr Achinger}
  \address{Institute of Mathematics, Polish Academy of Sciences
    \newline\indent ul. Śniadeckich 8, 00-656 Warsaw, Poland}
  \email{pachinger@impan.pl}

  \author[M. Zdanowicz]{Maciej Zdanowicz}
  \address{\'Ecole Polytechnique F\'ed\'erale de Lausanne, Chair of Algebraic Geometry \newline 
    \indent MA C3 585 (Bâtiment MA), Station 8, CH-1015 Lausanne}
  \email{maciej.zdanowicz@epfl.ch}

  \title[Non-liftable Calabi--Yau]{Non-liftable Calabi--Yau varieties \\ in characteristic $p \geq 5$}
  \date{\today}
  \subjclass[2010]{Primary 14J32, 14G17; Secondary 14J45, 14D15} 
  \keywords{non-equicharacteristic deformations, Calabi-Yau}

\begin{document}
\maketitle

%=============================================================================================%

\begin{abstract}
  Based on recent work of Totaro, we provide two simple constructions of non-liftable Calabi--Yau varieties in characteristic $p \geq 5$.  Our examples are of dimension ${2p \text{ and } 2p+1}$.
\end{abstract}

%=============================================================================================%
%=============================================================================================%

\section{Introduction}

In the following short note, we consider \emph{Calabi--Yau varieties} defined as smooth varieties with trivial canonical bundle and satisfying $H^i(X,\cO_X) = 0$, for $0 < i < \dim X$.  An important feature of Calabi--Yau varieties in characteristic zero is the smoothness of their deformation spaces.  There are many proofs of this property (see \cite{Bogomolov,Tian,Todorov,ZivRan}) based both on analytic and algebraic methods.  The situation turns out to be very different in characteristic $p>0$.  In \cite{Hirokado} Hirokado constructed a projective Calabi--Yau threefold over a field of characteristic three which does not lift to characteristic zero, and therefore its deformation space over the ring of Witt vectors is not smooth.  Examples of non-liftable Calabi--Yau threefolds appeared also in works of Schröer \cite{Schroer}, and Cynk and van Straten \cite{Cynk_vanStraten}, however the constructions worked only for finitely many primes $p$.  More precisely, the examples of Schröer were defined over fields of characteristic two and three, and those of Cynk and van Straten over fields of characteristic three and five (the approach of \cite{Cynk_vanStraten} also led to the construction of non-liftable Calabi--Yau algebraic spaces for special values of $p \leq 1000$).

The goal of the present note is to use the recent results of \cite{Totaro} concerning failure of Kodaira vanishing on Fano varieties to construct non-liftable projective Calabi--Yau varieties over fields of characteristic $p \geq 5$.  Our main contribution is the following theorem combining the results of Proposition~\ref{thm:construction1} and Proposition~\ref{thm:construction2} given below.

\begin{thmi}
For every algebraically closed field of characteristic $p \geq 5$ there exist projective Calabi--Yau varieties of dimension $2p$ and $2p+1$ defined over this field which do not lift formally to any extension of the ring of Witt vectors.
\end{thmi}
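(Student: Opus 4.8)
The Theorem is exactly the conjunction of the two Propositions, so the plan is to reduce to exhibiting, for each $p \geq 5$ and each $d \in \{2p,\,2p+1\}$, a single smooth projective Calabi--Yau variety $X$ over $k$ that admits no compatible system of flat lifts over $R/\mathfrak m_R^n$ for any complete discrete valuation ring $R \supseteq W(k)$ with $\Spec$ of its fraction field of characteristic $0$. I would manufacture $X$ directly from Totaro's Fano variety $Y$: take $Y$ of dimension $d+1$ carrying an ample line bundle $A_Y$ with $H^j(Y, A_Y^{-1}) \neq 0$ for some $0 < j < p$ (the failure of Kodaira vanishing), and let $X \in \linsys{-K_Y}$ be a smooth anticanonical member, with a cyclic-cover or nested-section variant in the odd-dimensional case. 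Adjunction gives $\omega_X \isom \cO_X$, while the intermediate vanishing $H^i(X, \cO_X) = 0$ for $0 < i < d$ is inherited from the Fano vanishing on $Y$ by chasing $0 \to \cO_Y(K_Y) \to \cO_Y \to \cO_X \to 0$; thus $X$ is genuinely Calabi--Yau.

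Next I would transport the failure of Kodaira vanishing to $X$ in the \emph{same low degree}. Setting $A = A_Y|_X$ (ample), tensoring the restriction sequence by $A_Y^{-1}$ gives $0 \to A_Y^{-1}\otimes\cO_Y(K_Y) \to A_Y^{-1} \to A^{-1} \to 0$, and an analysis of the connecting maps in the long exact sequence should yield $H^j(X, A^{-1}) \neq 0$ with the same $0 < j < p$. This is precisely the input for the lifting obstruction. By Deligne--Illusie, a lift of $X$ to $W_2(k)$ produces the Cartier-type decomposition of the truncation $\tau_{<p}\Omega^\bullet_X$, and in particular the Kodaira--Akizuki--Nakano vanishing $H^q(X, \Omega^i_X \otimes A^{-1}) = 0$ for $i + q < p$; specializing to $i = 0$ gives $H^j(X, A^{-1}) = 0$ for $j < p$. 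Crucially this vanishing uses only $\tau_{<p}$ and so holds regardless of $\dim X$, even though here $\dim X = d \geq 2p > p$. It contradicts the transferred non-vanishing, so $X$ does not lift to $W_2(k)$; and since the class in $H^j(X, A^{-1})$ survives base change along any field extension $k'/k$, the same obstruction rules out lifts over $W_2(k')$, hence formal lifts over $W(k)$ and over every unramified extension.

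The delicate point is the quantifier \emph{any}: over a ramified $R$ the square-zero quotient $R/\mathfrak m_R^2$ is equicharacteristic, so the first infinitesimal deformation is unobstructed and Deligne--Illusie gives nothing at the initial step. A formal lift over $R$ still furnishes compatible flat lifts over all Artinian quotients $R/\mathfrak m_R^n$, and the plan is to propagate the obstruction in $H^2(X, T_X) \isom \Ext^2(\Omega_X, \cO_X)$ up to the first step at which $p$ ceases to vanish, relating it to the low-degree Kodaira class through functoriality of the obstruction theory along $W(k) \to R$ (flatness of $R$ over $W(k)$ keeps the comparison on the relevant $I/I^2$ injective). The main obstacle I anticipate is exactly this: controlling the possibly nontrivial equicharacteristic deformation that precedes the first mixed-characteristic step and showing that it cannot absorb the class detected by the failure of Kodaira vanishing. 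Settling this — rather than the essentially formal geometry of the anticanonical construction — is where the real work lies.
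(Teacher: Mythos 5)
Your construction of the varieties themselves (anticanonical section of Totaro's Fano $X$, and the double cover) and the verification that they are Calabi--Yau match the paper. But your non-liftability mechanism is wrong, and the gap is not the one you flag. Your starting premise --- that Totaro's example carries an ample $A_Y$ with $H^j(Y, A_Y^{-1}) \neq 0$ for some $0 < j < p$ --- is false. What Totaro's variety (of dimension $2p+1$) actually provides is $\chi(X,\cO_X(A)) < 0$ together with $H^i(X,\cO_X(A))=0$ for $i \geq 2$, hence $H^1(X,\cO_X(A)) \neq 0$; since $-K_X = 2A$, this reads $H^1(X, K_X + 3A) \neq 0$, i.e.\ in the dual form $H^{2p}\bigl(X, \cO_X(-3A)\bigr) \neq 0$. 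The non-vanishing sits in degree $2p$, not in any degree $< p$. The truncated Deligne--Illusie decomposition of $\tau_{<p}\Omega^\bullet_X$ only yields $H^q(X,\Omega^i_X \otimes L^{-1}) = 0$ for $i+q < \min(p,\dim X) = p$, which says nothing about degree $2p$, so a $W_2(k)$-lift produces no contradiction whatsoever. Indeed the paper explicitly notes that because $\dim > p-1$ the results of Deligne--Illusie cannot be applied to these varieties, and it poses as an \emph{open question} whether Totaro's examples and the Calabi--Yau varieties lift to $W_2(k)$; your argument, if correct, would resolve that question, which is a sign it cannot be repaired by "an analysis of the connecting maps."

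The idea you are missing is the paper's Lemma~\ref{lem:non_liftability}, which bypasses Deligne--Illusie entirely and simultaneously disposes of the ramified case you correctly identify as delicate but leave unresolved. Transfer not a cohomology class but the Euler characteristic: the twisted restriction sequence (resp.\ the projection formula for the double cover) gives $\chi(Y,\cO_Y(A)) = \chi(X,\cO_X(A)) < 0$, using that $\cO_X(K_X+A)$ restricts to $\cO_{\PP^1}(-1)$ on the fibres of $\pi$ and so has no cohomology. Now suppose $Y$ lifts formally over \emph{any} DVR $R$ with residue field $k$. Since $H^2(Y,\cO_Y)=0$, the ample bundle deforms, Grothendieck algebraization yields a projective scheme over $\Spec R$, and flatness makes $\chi$ constant in the family; but the geometric generic fibre lives in characteristic zero, where Kodaira vanishing forces $\chi = h^0 \geq 0$. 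This contradicts $\chi < 0$ uniformly in $R$ --- ramification never enters, because the argument uses only the characteristic-zero generic fibre, not any mod-$p^2$ obstruction theory. Your proposed scheme of "propagating the obstruction in $H^2(X,T_X)$ past the equicharacteristic steps" has no known implementation and is not needed.
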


\noindent Our examples arise as anticanonical sections and two-to-one coverings of Totaro's Fano varieties violating Kodaira vanishing.  We now recall the necessary properties of those varieties.  From now on, by $k$ we denote an algebraically closed field of characteristic $p \geq 5$, and by $W(k)$ the associated ring of Witt vectors.  Let $N = p+2$, and let ${\pi \colon {\rm Fl}(1,2,N) \to {\rm Gr}(2,N)}$ be the natural projection from the partial flag variety to the Grassmanian of two-dimensional subspaces ${\rm Gr}(2,N)$.  The variety $X$ is defined by the Frobenius pullback of $\pi$, that is, the diagram:
\[
\xymatrix{
  X \ar[r]\ar[d] & {\rm Fl}(1,2,N) \ar[d]^{\pi} \\
  {\rm Gr}(2,N) \ar[r]_(0.45){F} & {\rm Gr}(2,N).
  }
\]

\noindent Since ${\rm Fl}(1,2,N)$ is isomorphic to the projectivization $\PP_{{\rm Gr}(2,N)}(\cS)$ of the tautological vector bundle $\cS$ on ${\rm Gr}(2,N)$, the variety $X$ is in fact isomorphic to $\PP_{{\rm Gr}(2,N)}(F^*\cS)$.  By \cite[Theorem 2.1]{Totaro} we know that there exists a very ample divisor $A$ on $X$ satisfying the following properties:
\begin{enumerate}
  \item $\cO_X(A)$ restricts to $\cO_{\PP^1}(1)$ on the fibres of $\pi$,
  \item $-K_X = 2A$, \label{it1}
  \item $\chi(X,\cO_X(A)) < 0$, \label{it2}
  \item $H^i(X,\cO_X(A)) = 0$ for $i \geq 2$. \label{it3}
\end{enumerate}

%=============================================================================================%
%=============================================================================================%

\section{Non-liftable Calabi--Yau varieties}

In this section, we provide two examples of non-liftable Calabi--Yau varieties in characteristic $p \geq 5$.  The main technical tool that we use to infer non-liftability is the following standard lemma.  For the sake of completeness we provide a simple proof.

\begin{lemma}[{cf. \cite[page 4]{Totaro}}]\label{lem:non_liftability}
Let $Y$ be a smooth projective variety over $k$ satisfying $H^2(Y,\cO_Y) = 0$.  Assume that there exists an ample invertible sheaf $L$ such that ${\chi(Y,L \otimes \cO_Y(K_Y)) < 0}$.  Then $Y$ does not lift formally to any ramified extension of $W(k)$.
\end{lemma}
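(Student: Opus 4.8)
The plan is to argue by contradiction, using the constancy of the Euler characteristic in a flat family together with Kodaira vanishing on a characteristic-zero generic fibre. Suppose that $Y$ lifts formally to a ramified extension $R$ of $W(k)$, i.e.\ that there is a compatible system of flat liftings $Y_n$ over $R_n := R/\mathfrak m^{n+1}$ assembling into a formal scheme $\mathcal Y$ over $\mathrm{Spf}\,R$. The first step is to lift the ample sheaf $L$: the obstruction to extending a line bundle from $Y_n$ to $Y_{n+1}$ lies in $H^2(Y,\cO_Y) \otimes_k \mathfrak m^{n+1}/\mathfrak m^{n+2}$, which vanishes by hypothesis, so inductively we obtain a formal invertible sheaf $\mathcal L$ on $\mathcal Y$ restricting to $L$ on $Y$.

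Since $L$ is ample, $\mathcal L$ is relatively ample, and Grothendieck's formal existence theorem (formal GAGA) then algebraises the pair $(\mathcal Y, \mathcal L)$ to a flat projective scheme $\bar Y \to \Spec R$ equipped with a relatively ample invertible sheaf $\bar L$ whose restriction to the closed fibre is $(Y, L)$. As $\bar Y \to \Spec R$ is smooth and proper, the relative dualizing sheaf $\omega_{\bar Y/R}$ is an invertible sheaf commuting with base change; it restricts to $\cO_Y(K_Y)$ on the closed fibre and to the canonical bundle of each geometric fibre.

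Let $\bar K$ be an algebraic closure of the fraction field $K = \mathrm{Frac}(R)$, which has characteristic zero, and let $Y_{\bar K}$ denote the geometric generic fibre, a smooth projective variety over a field of characteristic zero carrying the ample sheaf $L_{\bar K}$. On the one hand, flatness of $\bar L \otimes \omega_{\bar Y/R}$ over the connected base $\Spec R$ makes the Euler characteristic constant, so
\[
\chi\big(Y_{\bar K}, L_{\bar K} \otimes \omega_{Y_{\bar K}}\big) = \chi\big(Y, L \otimes \cO_Y(K_Y)\big) < 0.
\]
On the other hand, Kodaira vanishing applied to the ample sheaf $L_{\bar K}$ on $Y_{\bar K}$ yields $H^i\big(Y_{\bar K}, L_{\bar K}\otimes \omega_{Y_{\bar K}}\big) = 0$ for all $i>0$, whence $\chi\big(Y_{\bar K}, L_{\bar K}\otimes \omega_{Y_{\bar K}}\big) = h^0 \geq 0$, a contradiction.

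The main obstacle is the algebraisation step: a priori a formal lift need not arise from an actual scheme over $R$, and it is precisely the relative ampleness of $\mathcal L$ — available thanks to the vanishing $H^2(Y,\cO_Y)=0$ that allowed us to lift $L$ in the first place — that renders Grothendieck's existence theorem applicable and furnishes the projective model $\bar Y/R$ needed to pass to a genuine characteristic-zero generic fibre and to invoke Kodaira vanishing there.
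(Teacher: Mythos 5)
Your proof is correct and follows essentially the same route as the paper's: assume a formal lift over a DVR $R$, lift $L$ formally using the vanishing of the obstruction space $H^2(Y,\cO_Y)$, algebraize via Grothendieck's existence theorem to a projective scheme over $\Spec R$, and play off constancy of the Euler characteristic in the flat family against Kodaira vanishing on the characteristic-zero geometric generic fibre. If anything, your write-up is slightly more careful at the final step, concluding $\chi = h^0 \geq 0$, where the paper asserts the quantity is ``positive''; nonnegativity is what the contradiction actually requires.
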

\begin{proof}
We assume the contrary, that is, we take a discrete valuation ring $(R,\mathfrak{m})$ with residue field $k$ together with a scheme $\cY_{\rm formal}$ over $\mathrm{Spf}\, R$ and an isomorphism ${\cY_{\rm formal} \otimes k \isom Y}$.  Since $H^2(Y,\cO_Y) = 0$, we see that the ample line bundle $L$ deforms over $\mathrm{Spf}\, R$ and therefore by the Grothendieck algebraization theorem there exists a scheme $\cY$ over $\Spec(R)$ together with a relatively ample line bundle $\cL$.  Using invariance of Euler characteristic, we observe that
\[
\chi(Y,L \otimes \cO_Y(K_Y)) = \chi(\cY_{\eta},\cL_{\eta} \otimes \cO_{\cY_\eta}(K_{\cY_\eta})),
\]
where $\eta$ is the geometric generic point of $\Spec(R)$.  By Kodaira vanishing in characteristic zero we know that the last quantity is equal to $H^0(\cY_{\eta},\cL_{\eta} \otimes \cO_{\cY_\eta}(K_{\cY_\eta}))$, and is therefore positive.  This gives a contradiction, and hence finishes the proof.
\end{proof}

We are now ready to provide our constructions.  The first construction is a smooth divisor inside the anticanonical linear system of variety $X$ described above.  

\begin{prop}\label{thm:construction1}
Let $Y \in \linsys{-K_X}$ be a smooth divisor on $X$.  Then $Y$ is Calabi--Yau variety which does not formally lift to any ramified extension of $W(k)$.
\end{prop}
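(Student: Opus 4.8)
The plan is to verify that $Y$ satisfies the Calabi--Yau conditions and then to apply Lemma~\ref{lem:non_liftability} with the ample line bundle $L = \cO_X(A)|_Y$.

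First I would establish that $Y$ is Calabi--Yau. Smoothness is part of the hypothesis. Since $Y \in \linsys{-K_X}$, we have $\cO_X(Y) \isom \cO_X(-K_X)$, so adjunction gives $\cO_Y(K_Y) \isom \cO_X(K_X + Y)|_Y \isom \cO_Y$ and the canonical bundle of $Y$ is trivial. For the intermediate cohomology I would feed the structure sequence $0 \to \cO_X(K_X) \to \cO_X \to \cO_Y \to 0$ (note $\cO_X(-Y) \isom \cO_X(K_X)$) into its long exact sequence. As $X \isom \PP_{{\rm Gr}(2,N)}(F^*\cS)$ is a projective bundle over a Grassmannian, the Leray spectral sequence for $\pi$ combined with $H^{>0}({\rm Gr}(2,N),\cO) = 0$ gives $H^i(X,\cO_X) = 0$ for $i > 0$. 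Serre duality on the $(2p+1)$-dimensional $X$ identifies $H^{i+1}(X,\cO_X(K_X)) \isom H^{2p-i}(X,\cO_X)^\vee$, which therefore also vanishes for $0 < i < 2p$. The long exact sequence then yields $H^i(Y,\cO_Y) = 0$ for all $0 < i < 2p = \dim Y$, so $Y$ is Calabi--Yau; in particular $H^2(Y,\cO_Y) = 0$, as required by the lemma.

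Next I would prepare the input for Lemma~\ref{lem:non_liftability}. The restriction $L = \cO_Y(A)$ is ample because $A$ is very ample on $X$, and since $\cO_Y(K_Y)$ is trivial the hypothesis becomes $\chi(Y,\cO_Y(A)) < 0$. Twisting the structure sequence by $\cO_X(A)$ and using $-K_X = 2A$ (so that $A - Y \sim A + K_X = -A$) produces $0 \to \cO_X(-A) \to \cO_X(A) \to \cO_Y(A) \to 0$, and additivity of the Euler characteristic gives $\chi(Y,\cO_Y(A)) = \chi(X,\cO_X(A)) - \chi(X,\cO_X(-A))$.

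The crux is to show that the correction term vanishes. By Serre duality on $X$ we have $\chi(X,\cO_X(-A)) = (-1)^{\dim X}\chi(X,\cO_X(A)\otimes\cO_X(K_X))$, and $\cO_X(A)\otimes\cO_X(K_X) \isom \cO_X(-A)$ by \eqref{it1}; since $\dim X = 2p+1$ is odd this reads $\chi(X,\cO_X(-A)) = -\chi(X,\cO_X(-A))$, forcing $\chi(X,\cO_X(-A)) = 0$. Hence $\chi(Y,\cO_Y(A)) = \chi(X,\cO_X(A)) < 0$ by \eqref{it2}, and Lemma~\ref{lem:non_liftability} applies to show that $Y$ does not lift formally to any ramified extension of $W(k)$. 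I expect the main obstacle to be exactly this Euler-characteristic bookkeeping: the decisive point is that the odd-dimensionality of $X$ makes $\chi(X,\cO_X(-A))$ vanish, which is what allows Totaro's negativity $\chi(X,\cO_X(A)) < 0$ to be transported verbatim to the anticanonical section $Y$.
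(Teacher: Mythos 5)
Your proposal is correct, and the Calabi--Yau part runs along the same lines as the paper's (adjunction, then the long exact sequence of $0 \to \cO_X(K_X) \to \cO_X \to \cO_Y \to 0$; you obtain $H^{>0}(X,\cO_X)=0$ via Leray for the $\PP^1$-bundle $X \to \Gr(2,N)$ together with $H^{>0}(\Gr(2,N),\cO)=0$, whereas the paper invokes Kempf vanishing on $X$ directly as a homogeneous variety --- both are fine, and your route avoids the slightly subtle point that the Frobenius twist $X$ is still homogeneous). The genuine divergence is at the crux. The paper kills the correction term by a geometric argument: $\cO_X(K_X+A)$ restricts to $\cO_{\PP^1}(-1)$ on the fibres of $\pi$, so its derived pushforward vanishes and hence \emph{all} of its cohomology groups vanish. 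You instead use a purely formal parity argument: since $2(K_X+A) = 2K_X + 2A = K_X$, the bundle $\cO_X(K_X+A) = \cO_X(-A)$ is Serre-self-dual, so $\chi(X,\cO_X(-A)) = (-1)^{\dim X}\chi(X,\cO_X(-A))$, and oddness of $\dim X = 2p+1$ forces $\chi(X,\cO_X(-A)) = 0$. This suffices here, because Lemma~\ref{lem:non_liftability} only needs the Euler characteristic, and your argument requires nothing beyond the numerics $-K_X = 2A$ and the odd dimension. What the paper's fibration argument buys in exchange is the stronger conclusion $H^i(X,\cO_X(K_X+A)) = 0$ for all $i$, which is what gets reused in the proof of Proposition~\ref{thm:construction2}; your trick would not by itself deliver that statement, but for the proposition at hand it is a complete and arguably slicker substitute.
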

\begin{proof}
First, we prove that $Y$ is in fact a Calabi--Yau variety.  The condition $K_Y = 0$ follows from adjunction $\cO_Y(K_Y) \isom \cO_X(K_X + Y)_{|Y} = \cO_Y$.  In order to prove that $H^i(Y,\cO_Y) = 0$ for $0 < i < \dim Y$, we use the long exact sequence of cohomology associated with the sequence 
\begin{align}
0 \ra \cO_X(K_X) \ra \cO_X \ra \cO_Y \ra 0. \label{eq1}
\end{align}
Indeed, we see that the sequence 
\[
\ldots \ra H^i(X,\cO_X) \ra H^i(Y,\cO_Y) \ra H^{i+1}(X,\cO_X(K_X)) \ra \ldots
\]
is exact, and the side terms vanish by Kempf vanishing, since $X$ is a homogeneous variety and $0$ is a dominant weight, and Serre duality.

To prove that $Y$ does not lift, we twist \eqref{eq1} by $\cO_X(A)$ and to obtain
\[
0 \ra \cO_X(K_X+A) \ra \cO_X(A) \ra \cO_Y(A) \ra 0.
\]
The bundle $\cO_X(K_X+A)$ restricts to $\cO_{\PP^1}(-1)$ along the fibres of the projection 
\[
\pi \colon {\rm Fl}(1,2,N) \to {\rm Gr}(2,N),
\]
and hence all its cohomology groups vanish.  Consequently, we see that 
\[
\chi(Y,\cO_Y(A)) = \chi(X,\cO_X(A)) < 0.
\]  
This prevents $Y$ from being liftable by Lemma~\ref{lem:non_liftability}.
\end{proof}

\noindent Observe that, taking the Stein factorization $Y \to Y' \to {\rm Gr}(2,N)$ of the natural projection, the scheme $Y$ is a small resolution of a double cover of ${\rm Gr}(2,N)$.  Moreover, it is easy to see that $Y'$ is liftable, confirming an expectation of Langer that every Calabi--Yau variety is a small resolution of a singular liftable Calabi--Yau.

We now proceed to the second construction, which is the $\ZZ/2\ZZ$ cyclic covering of $X$ ramified along a smooth divisor inside $\linsys{-2K_X}$.

\begin{prop}\label{thm:construction2}
Let $D \in \linsys{-2K_X}$ be a smooth divisor and let $Y$ be a $\ZZ/2\ZZ$ cyclic covering ramified along $D$.  Then $Y$ is Calabi--Yau variety which does not formally lift to any ramified extension of $W(k)$.
\end{prop}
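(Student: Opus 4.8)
The plan is to run the same argument as in Proposition~\ref{thm:construction1}, with the adjunction sequence on the divisor replaced by the decomposition of $f_*\cO_Y$ for the finite double cover $f \colon Y \to X$. Since $D \in \linsys{-2K_X}$ and $-K_X = 2A$, the class $\cO_X(D) = \cO_X(-2K_X)$ is the square of $M := \cO_X(-K_X)$, so $Y$ is the standard degree-two cyclic cover attached to the relation $D \in \linsys{M^{\otimes 2}}$. Because $p \geq 5$ (in particular $p \neq 2$) and $D$ is smooth, the cover $Y$ is smooth and projective, and $f_*\cO_Y \isom \cO_X \oplus M^{-1} = \cO_X \oplus \cO_X(K_X)$.

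I would first check that $Y$ is Calabi--Yau. The canonical bundle is computed by the cyclic-cover formula $\cO_Y(K_Y) \isom f^*\big(\cO_X(K_X) \otimes M\big) = f^*\cO_X = \cO_Y$, using $M = \cO_X(-K_X)$. For the intermediate cohomology, the finiteness of $f$ gives $H^i(Y,\cO_Y) \isom H^i(X, f_*\cO_Y) \isom H^i(X,\cO_X) \oplus H^i(X,\cO_X(K_X))$; both summands vanish for $0 < i < \dim Y = \dim X = 2p+1$ exactly as in Proposition~\ref{thm:construction1}, by Kempf vanishing and Serre duality. In particular $H^2(Y,\cO_Y) = 0$.

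For non-liftability I would apply Lemma~\ref{lem:non_liftability} to the line bundle $L := f^*\cO_X(A)$, which is ample since it is the pullback of an ample bundle along the finite morphism $f$. As $\cO_Y(K_Y) \isom \cO_Y$, the projection formula together with additivity of the Euler characteristic yields
\[
\chi\big(Y, L \otimes \cO_Y(K_Y)\big) = \chi\big(X, \cO_X(A) \otimes f_*\cO_Y\big) = \chi\big(X,\cO_X(A)\big) + \chi\big(X,\cO_X(A+K_X)\big).
\]
The second summand vanishes: $\cO_X(A+K_X)$ restricts to $\cO_{\PP^1}(-1)$ on the fibres of $\pi$, so all of its cohomology is zero, which is the same observation already used in Proposition~\ref{thm:construction1}. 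The first summand is negative by property~\eqref{it2}. Hence $\chi(Y, L \otimes \cO_Y(K_Y)) = \chi(X,\cO_X(A)) < 0$, and Lemma~\ref{lem:non_liftability} forbids $Y$ from lifting formally to any ramified extension of $W(k)$.

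Since the whole argument parallels the divisor case, I do not expect a serious obstacle; the only place demanding care is the bookkeeping for the cover, namely the identification $M = \cO_X(-K_X)$ and the resulting formulas $f_*\cO_Y = \cO_X \oplus \cO_X(K_X)$ and $\cO_Y(K_Y) = f^*(\cO_X(K_X) \otimes M)$, where the hypothesis $p \neq 2$ (ensuring a smooth cyclic double cover) is used. Everything else reduces, via the projection formula, to the cohomological facts about $X$ already established for Proposition~\ref{thm:construction1}.
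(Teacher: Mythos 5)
Your proposal is correct and follows essentially the same route as the paper: the cyclic-cover formula $\cO_Y(K_Y) \isom f^*(\cO_X(K_X)\otimes\cO_X(-K_X)) = \cO_Y$, the decomposition $f_*\cO_Y \isom \cO_X \oplus \cO_X(K_X)$ combined with Kempf vanishing and Serre duality for the intermediate cohomology, and the projection formula plus the vanishing of all cohomology of $\cO_X(K_X+A)$ to get $\chi(Y, f^*\cO_X(A)) = \chi(X,\cO_X(A)) < 0$, feeding into Lemma~\ref{lem:non_liftability}. Your explicit remarks that $p \neq 2$ ensures smoothness of the double cover and that $f^*\cO_X(A)$ is ample because $f$ is finite are points the paper leaves implicit, but they are exactly the right justifications.
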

\begin{proof}
Let $D \in \lvert -2K_X \rvert$ be a smooth divisor, and let $p \colon Y \to X$ be a $\ZZ/2\ZZ$-covering ramified over $D$.  We claim that $Y$ is a non-liftable Calabi--Yau variety.  First, we observe that the canonical bundle of $Y$ is isomorphic to $p^*\cO_X(K_X - K_X) = \cO_Y$, and that 
\[
H^i(Y,\cO_Y) = H^i(X,\cO_X \oplus \cO_X(K_X)) = H^i(X,\cO_X) \oplus H^{n-i}(X,\cO_X) = 0
\] 
for $0 < i < \dim X$.  To conclude we use the vanishing $H^i(X,\cO_X(K_X+A)) = 0$ and projection formula to see that 
\[
H^i(Y,p^*\cO_X(A)) = H^i(X,\cO_X(A)) \oplus H^i(X,\cO_X(K_X + A)) = H^i(X,\cO_X(A)),
\]
 and hence we can apply Lemma~\ref{lem:non_liftability}.
\end{proof}

We remark that apart from non-liftability the above examples also fail to satisfy the Kodaira vanishing.

%=============================================================================================%
%=============================================================================================%

\section{Open questions}

% The above considerations do not provide examples in characteristic two and three, and do not describe in detail the structure of constructed Calabi--Yau varieties.  

It would be interesting to know if similar constructions work for $p = 2,3$, which could lead to low-dimensional examples.  An easy idea to obtain non-liftable threefolds, is to take a complete intersection inside different Fano varieties included in \cite[Theorem 3.1]{Totaro}.  Unfortunately the exhibited ample line bundle satisfy weaker positivity properties and the straightforward arguments does not work.  Moreover, it is a peculiarity of characteristic $p > 0$ geometry that Calabi--Yau varieties could be unirational, and therefore it is natural ask if our varieties satisfy this property.

%% TODO: Uniwymiernośc, p = 2,3, trzy rozmaitości, Frobenius pullback wiązki

In \cite{Ekedahl} Ekedahl provides a detailed analysis of deformation spaces of non-liftable Calabi--Yau threefolds constructed by Hirokado and Schröer.  This suggests the following questions concerning deformation theory of Totaro's Fano varieties and our Calabi--Yau varieties.

\begin{question}
Is the deformation space $\Def_{X/W(k)}$ of Totaro's example isomorphic to ${\rm Spf}\, k$?  What do the deformation spaces of our Calabi--Yau varieties look like?
\end{question}

It is worth noting that, since the dimension of varieties considered in this paper is greater than $p-1$, the classical results of \cite{Deligne_Illusie} can not be directly applied to reason about mod $p^2$ liftability of examples of Totaro and ours.  This leads to a simplified version of the above question.

\begin{question}
Do the examples of Totaro or Calabi--Yau varieties constructed in this note lift to Witt vectors of length $2$?
\end{question}

\noindent A positive answer to the above would shed some light on the necessity of assumptions used in proof of Kodaira vanishing included in \emph{loc.cit}..  One possible approach to study the deformation theory of Totaro's variety $X$ would be to take advantage of the structure of a projective bundle, that is, analyze the natural transformation of deformation functors $\Def_{X/W(k)} \to \Def_{F^*\cS}$, where $\cS$ is the rank two tautological bundle on ${\rm Gr}(2,n)$, induced by the pushforward of the relative $\cO_{X}(1)$.

\section*{Acknowledgements}

We would like to thank Burt Totaro, Adrian Langer and Stefan Schröer for helpful discussions.

%=============================================================================================%
%=============================================================================================%

\bibliographystyle{amsalpha} 
\bibliography{bib.bib}

%=============================================================================================%

\end{document}